\documentclass[a4paper,11pt]{article}
\pdfoutput=1
\usepackage{hyperref}
\hypersetup{hypertexnames = false, bookmarksdepth = 2, bookmarksopen = true, colorlinks, linkcolor = black, citecolor = black, urlcolor = black, pdfstartview={XYZ null null 1}}

\usepackage{amsfonts}
\usepackage[fleqn, leqno]{amsmath}
\usepackage{amsthm}

\usepackage{zref-clever}
\zcsetup{cap}
\zcsetup{noabbrev}
\newcommand{\cref}[1]{\zcref{#1}}
\newcommand{\Cref}[1]{\zcref[S]{#1}}

\usepackage{booktabs}
\usepackage{colonequals}
\usepackage{enumitem}
\usepackage[vmargin=1in, hmargin=0.9in, includefoot, headheight=0pt, headsep=0pt, footskip=30pt]{geometry}
\usepackage{mathtools}
\usepackage{parskip}
\usepackage{thmtools}
\usepackage{tikz-cd}
\usepackage{xparse}
\usepackage{xpatch}
\usepackage{xspace}

\usepackage[utf8]{inputenc}
\usepackage[T1]{fontenc}
\usepackage{libertine}
\usepackage[libertine]{newtxmath}
\usepackage[scaled=0.83]{beramono}
\usepackage{eucal}
\usepackage{microtype}
\usepackage{gitinfo2}

\usepackage[backend=biber, maxbibnames=99, datamodel=mrnumber, sortcites]{biblatex}

\DeclareDelimFormat{postnotedelim}{\addcomma\addnbspace}

\DeclareFieldFormat{mrnumber}{%
  MR\addcolon\space
  \ifhyperref
  {\href{http://www.ams.org/mathscinet-getitem?mr=#1}{\nolinkurl{#1}}}
{\nolinkurl{#1}}}

\renewbibmacro*{doi+eprint+url}{%
  \iftoggle{bbx:doi}
  {\printfield{doi}}
  {}%
  \newunit\newblock
  \printfield{mrnumber}%
  \newunit\newblock
  \iftoggle{bbx:eprint}
  {\usebibmacro{eprint}}
  {}%
  \newunit\newblock
  \iftoggle{bbx:url}
  {\usebibmacro{url+urldate}}
{}}

\DeclareSourcemap{
  \maps[datatype=bibtex]{
    \map{
      \step[fieldset=issn, null]
    }
  }
}
\xpretobibmacro{bib+doi+url}
{\iffieldundef{doi}
  {}
{\clearfield{url}}}
{}{}

\xpretobibmacro{cite+doi+url}
{\iffieldundef{doi}
  {}
{\clearfield{url}}}
{}{}

\makeatletter
\g@addto@macro\thmt@autorefsetup{%
  \@xa\def\csname\thmt@envname autorefname\@xa\endcsname\@xa{\thmt@thmname}%
}
\patchcmd\@thm
  {\@topsepadd \thm@postskip}
  {\advance\thm@postskip-\parskip \@topsepadd \thm@postskip}
  {}{\typeout{Couldn't patch \string\@thm for parskip}}
\AddToHook{env/proof/end}{\advance\@topsepadd-\parskip}
\makeatother
\declaretheoremstyle[spaceabove = 3pt, spacebelow = 3pt, bodyfont = \itshape]{theorem}
\declaretheoremstyle[spaceabove = 3pt, spacebelow = 3pt]{remark}

\declaretheorem[style=theorem]{theorem}
\declaretheorem[style=theorem, sibling=theorem]{corollary}
\declaretheorem[style=theorem, sibling=theorem]{lemma}
\declaretheorem[style=theorem, sibling=theorem]{proposition}

\declaretheorem[style=remark, sibling=theorem]{definition}

\declaretheorem[style=remark, sibling=theorem]{remark}

\declaretheorem[style=theorem, numberwithin=section, title=Theorem]{alphatheorem}

\zcRefTypeSetup{alphatheorem}{Name-sg=Theorem,name-sg=theorem,Name-pl=Theorems,name-pl=theorems}
\zcRefTypeSetup{alphaconjecture}{Name-sg=Conjecture,name-sg=conjecture,Name-pl=Conjectures,name-pl=conjectures}
\zcRefTypeSetup{alphacorollary}{Name-sg=Corollary,name-sg=corollary,Name-pl=Corollaries,name-pl=corollaries}
\zcRefTypeSetup{alphaproposition}{Name-sg=Proposition,name-sg=proposition,Name-pl=Propositions,name-pl=propositions}

\zcRefTypeSetup{enumi}{refbounds={,\textup{(},\textup{)},}}

\setlist[enumerate]{font=\normalfont}

\mathchardef\mhyphen="2D

\usepackage[margin=2em]{caption}
\usetikzlibrary{calc}

\DeclareMathOperator\Br{Br}
\DeclareMathOperator{\Hom}{Hom}
\DeclareMathOperator{\Ext}{Ext}
\DeclareMathOperator{\ext}{ext}
\DeclareMathOperator{\codim}{codim}

\DeclareMathOperator\Pic{Pic}
\DeclareMathOperator{\dimvect}{\underline{\operatorname{dim}}}

\DeclareDocumentCommand\modulispace{om}{\IfNoValueTF{#1}{\mathrm{M}{(#2)}}{\mathrm{M}^{#1}(#2)}}
\DeclareDocumentCommand\repspace{om}{\IfNoValueTF{#1}{\mathrm{R}{(#2)}}{\mathrm{R}^{#1}(#2)}}

\newcommand{\field}{\mathbf{k}}                           %
\newcommand\tuple[1]{\ensuremath{\boldsymbol #1}}

\newcommand{\stable}{\ensuremath{\mhyphen\mathrm{st}}}
\newcommand\gen{\ensuremath{\mathrm{gen}}}

\newcommand\stratum{\ensuremath{\mathrm{S}}}

\usepackage{xpatch}

\makeatletter
\patchcmd\blx@bblinput{\blx@blxinit}
                      {\blx@blxinit
                      }{}{\fail}
\makeatother

\AtBeginBibliography{\small}

\title{Ample stability implies rigidity for quiver moduli}
\author{Pieter Belmans \and Gianni Petrella}

\begin{document}
\maketitle

\begin{abstract}
  We show that ample stability suffices to apply Teleman quantisation
  to the endomorphism bundle of the universal representation on a quiver moduli space,
  and thus in particular to deduce its infinitesimal rigidity.
  This answers a question left open in our earlier work
  with Brecan, Franzen, and Reineke on the sheaf cohomology of universal bundles.
  This sufficiency makes several recent results applicable in the widest possible range,
  subject only to ample stability,
  which for coprime dimension vectors guarantees that the Picard rank of the moduli space is maximal.
\end{abstract}

\section{Introduction}
\label{section:introduction}
Quiver moduli are algebraic varieties
attached to the very concrete situation of quivers and their representations,
whose geometry can be studied with powerful combinatorial tools.

This note is concerned with comparing ample stability
to an inequality which arose in \cite{MR4954467},
and which is sandwiched between ample and strong ample stability.
The inequality we are interested in, \cite[Equation~(34)]{MR4954467}\footnote{%
  We correct two typos in \cite[Equation~(34)]{MR4954467}:
  the left-hand side should read~$\mu(\tuple{d}^1)-\mu(\tuple{d}^\ell)$,
  and the arguments of the Euler form should read~$\langle\tuple{d}^m,\tuple{d}^n\rangle$,
in accordance with \cite[Corollary~3.20]{MR4954467}.},
which we will refer to as the \emph{rigidity inequality},
is attached to a non-trivial Harder--Narasimhan type~$\tuple{d}^*$
and reads
\begin{equation}
  \label{equation:rigidity-inequality}
  \mu(\tuple{d}^1)-\mu(\tuple{d}^\ell)
  <
  \sum_{1\leq m<n\leq\ell}\left( \mu(\tuple{d}^m)-\mu(\tuple{d}^n) \right)\left( -\langle\tuple{d}^m,\tuple{d}^n\rangle \right).
\end{equation}
Here~$\mu$ denotes the slope,
$\langle-,-\rangle$ is the Euler form,
and~$\tuple{d}^1,\ldots,\tuple{d}^\ell$ are the~$\mu$-semistable parts of
the Harder--Narasimhan type~$\tuple{d}^*$,
listed in order of strictly decreasing slope;
see \cref{section:proof} for the precise definitions.
We stress that this is one inequality for each non-trivial Harder--Narasimhan type~$\tuple{d}^*$;
for the results of \cite{MR4954467} to hold,
we would like \cref{equation:rigidity-inequality} to be satisfied for all of them.

\vspace{-0.5\baselineskip}
\paragraph{Two implications}
Between these notions, the following two implications hold.
\begin{equation}
  \label{equation:implications}
  \text{$\tuple{d}$ is strongly amply $\theta$\nobreakdash-stable}
  \Rightarrow
  \text{\eqref{equation:rigidity-inequality} holds for \emph{every} non-trivial $\tuple{d}^*$}
  \Rightarrow
  \text{$\tuple{d}$ is amply $\theta$\nobreakdash-stable}.
\end{equation}
The first implication is \cite[Proposition~4.9]{MR4954467},
and the second is \cite[Lemma~4.10]{MR4954467}, which assumes~$\tuple{d}$ to be~$\theta$\nobreakdash-coprime.\footnote{%
  Coprimality enters here because \eqref{equation:rigidity-inequality} bounds the codimensions of the non-trivial Harder--Narasimhan strata,
  which cover the complement of the \emph{semistable} locus,
  whereas ample stability (\cref{definition:amply-stable}) concerns the complement of the \emph{stable} locus;
  these coincide precisely when every~$\theta$\nobreakdash-semistable representation of dimension vector~$\tuple{d}$ is~$\theta$\nobreakdash-stable,
  for which~$\theta$\nobreakdash-coprimality is the standard sufficient condition.
  The converse, which is \cref{theorem:main}, needs no such assumption.
}
By \cite[Examples~4.8 and~4.14]{MR4954467} the first implication is strict.
The main result of this note is the converse of the second implication,
thus answering the question raised in \cite[Remark~1.3]{MR4954467}.
\begin{alphatheorem}
  \label{theorem:main}
  Let~$Q$ be a quiver.
  If $\tuple{d}$ is amply~$\theta$\nobreakdash-stable,
  then the rigidity inequality \eqref{equation:rigidity-inequality} holds
  for every non-trivial Harder--Narasimhan type~$\tuple{d}^*$.
\end{alphatheorem}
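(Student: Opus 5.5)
We deduce the rigidity inequality from a family of codimension bounds, one for each ``cut'' of the Harder--Narasimhan type. Write $\mu_m=\mu(\tuple{d}^m)$ and $a_{mn}=-\langle\tuple{d}^m,\tuple{d}^n\rangle$ for $m<n$.

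First, every $a_{mn}$ is non-negative. For semistable representations $V^m,V^n$ with $\mu_m>\mu_n$ we have $\hom(V^m,V^n)=0$, so $a_{mn}=\ext^1(V^m,V^n)\ge 0$. Also $\mu_m-\mu_n>0$.

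Next, telescope $\mu_1-\mu_\ell=\sum_{k=1}^{\ell-1}(\mu_k-\mu_{k+1})$. Since $\mu_m-\mu_n=\sum_{m\le k<n}(\mu_k-\mu_{k+1})$, we have the exact identity
\begin{equation*}
  \sum_{1\le m<n\le\ell}(\mu_m-\mu_n)\,a_{mn}
  =\sum_{k=1}^{\ell-1}(\mu_k-\mu_{k+1})\,c_k,
  \qquad
  c_k\colonequals\sum_{m\le k<n}a_{mn}=-\langle\tuple{d}',\tuple{d}''\rangle,
\end{equation*}
where $\tuple{d}'=\tuple{d}^1+\dots+\tuple{d}^k$ and $\tuple{d}''=\tuple{d}-\tuple{d}'$. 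All coefficients $\mu_k-\mu_{k+1}$ are strictly positive. Hence \eqref{equation:rigidity-inequality} follows once we know $c_k\ge 2$ for every $k$, and this even gives the strict inequality with room to spare. For $\ell=2$ the claim is exactly that the stratum of type $\tuple{d}^*$ has codimension $a_{12}\ge 2$, which follows directly from ample stability. The unstable locus has codimension at least~$2$, and Reineke's codimension formula gives $\codim \stratum_{\tuple{d}^*}=\sum_{m<n}a_{mn}$.

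The main step, and the expected obstacle, is showing $-\langle\tuple{d}',\tuple{d}''\rangle\ge 2$ for an arbitrary cut. Note that $\mu(\tuple{d}')>\mu(\tuple{d})$, so every representation with a subrepresentation of dimension vector $\tuple{d}'$ is $\theta$-unstable.

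Consider the incidence variety $Z=\{(U\subseteq V)\}\to\repspace{Q,\tuple{d}}$ of representations together with a subrepresentation of dimension vector $\tuple{d}'$. It is a vector bundle over a product of Grassmannians, so it is irreducible of dimension $\dim\repspace{Q,\tuple{d}}+\langle\tuple{d}',\tuple{d}''\rangle$. Its image $Y$ lies in the unstable locus, so $\codim Y\ge 2$. We then need $\codim Y\le -\langle\tuple{d}',\tuple{d}''\rangle$, i.e.\ that $Z\to Y$ is generically finite.

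I would prove this by exhibiting a dense set of points of $Y$ with a unique subrepresentation of dimension vector $\tuple{d}'$. Take $V=U\oplus W$ or a generic extension, with $U$ and $W$ general in $\repspace{Q,\tuple{d}'}$ and $\repspace{Q,\tuple{d}''}$. Then $\tuple{d}'$ is a vertex of the Harder--Narasimhan polygon of $V$: it is a break point separating slopes $\ge\mu_k$ from slopes $\le\mu_{k+1}$, since the generic Harder--Narasimhan types of $\tuple{d}'$ and $\tuple{d}''$ have this slope separation, inherited from $\tuple{d}^*$. A subrepresentation whose dimension vector is a vertex of the polygon must coincide with the corresponding filtration step. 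This gives uniqueness and hence generic finiteness.

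If this genericity argument becomes delicate, the fallback is to work with a Harder--Narasimhan stratum instead. Concatenate the generic Harder--Narasimhan types of $\tuple{d}'$ and $\tuple{d}''$; this is still a Harder--Narasimhan type because of the slope separation. Its stratum has codimension equal to $-\langle\tuple{d}',\tuple{d}''\rangle$ plus the internal terms. Bounding those internal terms by zero requires genericity, so the incidence-variety argument above is the preferred route.
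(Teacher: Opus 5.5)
Your argument is correct, but it takes a different route from the paper. The paper proves only what it needs. \cref{lemma:positivity}, which uses semistability of $\tuple{d}$ and Schofield's criterion for general subrepresentations, gives $\mathrm{N}_r\geq 1$ at every cut. So after the same telescoping identity, the inequality can fail only if all $\mathrm{N}_r=1$. The paper then uses just $\mathrm{N}_1=1$ to build the type $(\tuple{d}^1,\tuple{e}_\gen^*)$, whose stratum has codimension $\mathrm{N}_1=1$ by Reineke's formula; the slope ordering comes from the Shatz-type \cref{lemma:semicontinuity}. You instead prove the stronger bound $c_k=\mathrm{N}_k\geq 2$ at every cut by a dimension count on the universal Grassmannian $Z$. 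This bypasses Reineke's formula, Schofield's theorem and the positivity lemma, and gives the inequality with margin, $\sum_k(\mu_k-\mu_{k+1})c_k\geq 2(\mu_1-\mu_\ell)$. In exchange, the paper's route exhibits an explicit codimension-one Harder--Narasimhan stratum, which is what drives \cref{corollary:equivalence}.

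Your key step should be tightened in two places.

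First, you do not need a dense set of points. By the fibre dimension theorem, a single point of $Y$ with finite non-empty fibre already forces $\dim Y=\dim Z$, hence $\codim Y=c_k$.

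Second, drop the choice $U\oplus W$ with $U,W$ general. There the slope separation of the generic types of $\tuple{d}'$ and $\tuple{d}''$ is not inherited for free; it is precisely the content of \cref{lemma:semicontinuity}. Take instead any $V\in\stratum_{\tuple{d}^*}$, so that $\tuple{d}'$ is the dimension vector of the $k$th step $V_k$ of its Harder--Narasimhan filtration by definition. The uniqueness you only assert then follows from a short estimate. Let $N\subseteq V$ with $\dimvect N=\tuple{d}'$, and set $q\colonequals\lvert\dimvect(N/(N\cap V_k))\rvert$. Subrepresentations of $V/V_k$ have slope at most $\mu_{k+1}$, and quotients of $V_k$ have slope at least $\mu_k$. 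Hence $\theta(\dimvect N)\leq\theta(\tuple{d}')-(\mu_k-\mu_{k+1})q$, so $q>0$ would contradict $\theta(\dimvect N)=\theta(\tuple{d}')$. Therefore $N\subseteq V_k$, and equal dimension vectors give $N=V_k$.

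Finally, your worry about the fallback is unfounded. The internal terms are exactly the codimensions of the dense strata in $\repspace{Q,\tuple{d}'}$ and $\repspace{Q,\tuple{d}''}$, hence zero. That fallback is the paper's proof of \cref{proposition:not-amply-stable} carried out at an arbitrary cut rather than only at $k=1$, where the generic type of $\tuple{d}^1$ is trivial.
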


This makes it possible to get rid of a technical condition in the following recent results on quiver moduli:
\begin{itemize}[noitemsep]
  \item the higher cohomology vanishing
    in \cite[Theorem~A]{MR4954467};
  \item Schofield's conjecture that the universal representation is partial tilting
    in \cite[Corollary~C]{MR4954467};
  \item the rigidity for quiver moduli for acyclic quivers
    in \cite[Corollary~D]{MR4954467};
  \item the isomorphism between vector fields and the first Hochschild cohomology of~$\field Q$ for acyclic quivers
    in \cite[Theorem~B]{MR4883104};
  \item the fully faithfulness of the Fourier--Mukai functor attached to the universal representation for acyclic quivers
    in \cite[Theorem~D]{MR4883104}.
\end{itemize}
All these results were phrased in terms of strong ample stability.
However, as explained in \cite[Remark~1.2]{MR4954467},
for the proofs in \cite{MR4954467,MR4883104}
it actually suffices to assume \eqref{equation:rigidity-inequality}
for every non-trivial Harder--Narasimhan type.
By \cref{theorem:main}
it now suffices to simply check ample stability
(e.g., using \cite{quivertools}),
which is a natural condition with a clear geometric interpretation.

Combined with the second implication of \eqref{equation:implications},
\cref{theorem:main} shows that,
whenever~$\theta$\nobreakdash-semistability and~$\theta$\nobreakdash-stability coincide for~$\tuple{d}$,
ample stability is equivalent to the rigidity inequality
(\cref{corollary:equivalence}).
Thus, in the~$\theta$\nobreakdash-coprime setting of \cite{MR4954467,MR4883104},
ample stability is precisely the condition needed for the Teleman argument of
\cite[Theorem~A]{MR4954467}.
It is not, however, necessary for the resulting vanishing,
as shown by \cite[Example~4.15]{MR4954467}.

\vspace{-0.5\baselineskip}
\paragraph{Idea of the proof}
We will consider the first piece~$\tuple{d}^1$
of non-trivial Harder--Narasimhan types.
Semistability implies that~$-\langle\tuple{d}^1,\tuple{d}-\tuple{d}^1\rangle\geq 1$
by \cref{lemma:positivity},
and the rigidity inequality holds whenever this bound is strict
by \cref{lemma:rewriting}.
The proof then continues with the observation that equality can be used to produce
a \emph{possibly different} Harder--Narasimhan stratum of codimension~$1$,
by replacing the remaining pieces with the generic Harder--Narasimhan type
of~$\tuple{d}-\tuple{d}^1$, leading to \cref{proposition:not-amply-stable}.
Since ample stability rules out such a stratum,
the rigidity inequality follows.

\vspace{-0.5\baselineskip}
\paragraph{Acknowledgements}
P.B.~was partially supported by NWO (\href{https://doi.org/10.61686/RZKLF82806}{\texttt{doi:10.61686/RZKLF82806}}).
G.P.~was supported by the Luxembourg National Research Fund (FNR--17953441)
and by the Korteweg--de Vries Instituut, at the University of Amsterdam.
The strategy of using the Harder--Narasimhan type~$\tuple{e}_\gen^*$ in \cref{proposition:not-amply-stable}
was found using an LLM.

\section{Proof}
\label{section:proof}
Throughout, we let~$Q$ denote a quiver\footnote{
  Whilst \cite{MR1974891} assumes acyclic throughout,
  this is not necessary for the Harder--Narasimhan formalism
  nor for the codimension formula.
}, with vertices $Q_{0}$ and arrows $Q_{1}$.
Let~$\tuple{d}\in\mathbb{N}^{Q_0}$ be a dimension vector,
and~$\theta\in\Hom(\mathbb{Z}^{Q_0},\mathbb{Z})$ be a stability parameter in the sense of King \cite{MR1315461},
with~$\theta(\tuple{d})=0$.
We write~$\lvert\tuple{e}\rvert\colonequals\sum_{i\in Q_0}e_i$ for the total dimension of a dimension vector~$\tuple{e}$,
and~$\mu(\tuple{e})\colonequals\theta(\tuple{e})/\lvert\tuple{e}\rvert$ for its slope.
We recall the definition of the \emph{Euler form} on~$\mathbb{Z}^{Q_0}$,
\begin{equation}
  \langle\tuple{d},\tuple{e}\rangle\colonequals\sum_{i\in Q_0}d_ie_i-\sum_{\alpha\in Q_1}d_{\mathrm{s}(\alpha)}e_{\mathrm{t}(\alpha)};
\end{equation}
it satisfies~$\langle\tuple{d},\tuple{e}\rangle=\dim\Hom(M,N)-\dim\Ext^1(M,N)$
for representations~$M$ and~$N$ of dimension vectors~$\tuple{d}$ and~$\tuple{e}$.
For more background on quiver moduli and the Harder--Narasimhan stratification
we refer to \cite{MR1974891} and to the survey \cite{MR2484736}.

Let us recall that a representation~$M$ is~\emph{$\mu$-semistable}, resp.~\emph{$\mu$-stable},
if~$\mu(\dimvect N)\leq\mu(\dimvect M)$,
resp.~$\mu(\dimvect N)<\mu(\dimvect M)$,
for every proper non-zero subrepresentation~$N\subseteq M$.
When~$\theta(\dimvect M)=0$,
as is the case for representations of dimension vector~$\tuple{d}$,
this is equivalent to~$\theta(\dimvect N)\leq 0$ (resp.~$\theta(\dimvect N)<0$)
for all such~$N$,
and we then say that~$M$ is~\emph{$\theta$\nobreakdash-(semi)stable}.
Likewise,
a dimension vector~$\tuple{e}$ is~\emph{$\mu$-(semi)stable}
(or~\emph{$\theta$\nobreakdash-(semi)stable} when~$\theta(\tuple{e})=0$)
if there exists a~$\mu$-(semi)stable representation
of dimension vector~$\tuple{e}$,
which translates to the non-emptiness of the corresponding moduli spaces.
Representations of dimension vector~$\tuple{e}$ are parametrised by the affine space~$\repspace{Q,\tuple{e}}$,
in which the~$\mu$-semistable, resp.~$\mu$-stable, representations form open loci;
we denote the~$\theta$\nobreakdash-stable locus by~$\repspace[\theta\stable]{Q,\tuple{d}}$.

The dimension vector~$\tuple{d}$ is~\emph{$\theta$\nobreakdash-coprime}
if~$\theta(\tuple{e})\neq\theta(\tuple{d})$ for every proper non-zero subdimension vector~$\tuple{e}$ of~$\tuple{d}$;
in this case every~$\theta$\nobreakdash-semistable representation of dimension vector~$\tuple{d}$ is~$\theta$\nobreakdash-stable,
so that the stable and semistable loci of~$\repspace{Q,\tuple{d}}$ coincide.
In fact, the loci coincide if and only if
there is no decomposition~$\tuple{d}=\tuple{e}+\tuple{f}$
into non-zero~$\theta$\nobreakdash-semistable dimension vectors with~$\theta(\tuple{e})=0$,
which can be checked using \cite{quivertools}.

We recall the following from \cite{MR1974891}.
\begin{definition}
  \label{definition:hn-type}
  Let~$\tuple{d}$ be a dimension vector.
  A \emph{Harder--Narasimhan type} of~$\tuple{d}$ is
  a tuple~$\tuple{d}^*=(\tuple{d}^1,\ldots,\tuple{d}^\ell)$
  of non-zero~$\mu$-semistable dimension vectors such that~$\sum_{r=1}^\ell\tuple{d}^r=\tuple{d}$,
  and~$\mu(\tuple{d}^1)>\ldots>\mu(\tuple{d}^\ell)$.
  We say that~$\ell$ is the \emph{length},
  and that~$\tuple{d}^*$
  is \emph{non-trivial} if~$\ell\geq 2$.
\end{definition}
Every representation admits a unique Harder--Narasimhan filtration,
and op.~cit. stratifies~$\repspace{Q,\tuple{d}}$
by attaching the \emph{Harder--Narasimhan stratum}
\begin{equation}
  \label{equation:hn-stratum}
  \stratum_{\tuple{d}^*}
  \colonequals
  \{M\in\repspace{Q,\tuple{d}}\mid\text{$M$ has Harder--Narasimhan type $\tuple{d}^*$}\}
  \subseteq\repspace{Q,\tuple{d}}
\end{equation}
to the Harder--Narasimhan type~$\tuple{d}^*$ \cite[Definition~3.3]{MR1974891}.
By~\cite[Proposition~3.4]{MR1974891} we have that
\begin{equation}
  \label{equation:codimension}
  \codim_{\repspace{Q,\tuple{d}}}\stratum_{\tuple{d}^*}
  =
  -\sum_{1\leq m<n\leq\ell}\langle\tuple{d}^m,\tuple{d}^n\rangle.
\end{equation}
This stratification plays an important role in
applying Teleman quantisation to quiver moduli \cite{MR4954467}.

We also recall the following notion from \cite[Definition~4.1]{MR3683503}.
\begin{definition}
  \label{definition:amply-stable}
  The dimension vector~$\tuple{d}$ is \emph{amply $\theta$\nobreakdash-stable}
  if the non-stable locus~$\repspace{Q,\tuple{d}}\setminus\repspace[\theta\stable]{Q,\tuple{d}}$
  has codimension at least~2 in~$\repspace{Q,\tuple{d}}$.
\end{definition}
Every non-trivial Harder--Narasimhan stratum is contained in the non-stable locus,
so if~$\tuple{d}$ is amply~$\theta$\nobreakdash-stable
then~$\codim_{\repspace{Q,\tuple{d}}}\stratum_{\tuple{d}^*}\geq 2$
for every non-trivial Harder--Narasimhan type~$\tuple{d}^*$.
This codimension condition implies for instance
that~$\Br(\modulispace[\theta\stable]{Q,\tuple{d}})\cong\mathbb{Z}/\gcd(\tuple{d})\mathbb{Z}$ \cite[Theorem~4.2]{MR3683503},
and,
if~$\tuple{d}$ is moreover~$\theta$\nobreakdash-coprime,
that~$\Pic(\modulispace[\theta\stable]{Q,\tuple{d}})\cong\mathbb{Z}^{\#Q_0-1}$
\cite[Proposition~3.1]{MR4352662}.

Finally,
we recall the following sufficient and easy-to-check condition
for ample stability \cite[Proposition~5.1]{MR3683503}.
\begin{definition}
  \label{definition:strongly-amply-stable}
  The dimension vector~$\tuple{d}$ is \emph{strongly amply $\theta$\nobreakdash-stable}
  if for every destabilising proper non-zero subdimension vector~$\tuple{e}$
  (i.e., $\theta(\tuple{e})>0$)
  we have that~$-\langle\tuple{e},\tuple{d}-\tuple{e}\rangle\geq2$.
\end{definition}
This notion is studied in depth in \cite{martinez-acosta-thesis}.

\subsection{Properties of Harder--Narasimhan types}
\label{subsection:hn-types}
We will now introduce notation and terminology,
some of which already appeared implicitly in \cite[\S4.2]{MR4954467}.
We will use the notion of \emph{general ext} from \cite{MR1162487}:
$\hom(\tuple{e},\tuple{f})$, resp.~$\ext(\tuple{e},\tuple{f})$,
denotes the value of~$\dim\Hom(M,N)$, resp.~$\dim\Ext^1(M,N)$,
for general representations~$M$ and~$N$ of dimension vectors~$\tuple{e}$ and~$\tuple{f}$;
by semicontinuity this general value is the minimal one, see \cite[p.~48]{MR1162487}.
\vspace{-0.5\baselineskip}
\paragraph{Notation}
To a Harder--Narasimhan type~$\tuple{d}^*$
we associate the \emph{general ext numbers}
\begin{equation}
  \mathrm{E}_{m,n}\colonequals -\langle\tuple{d}^m,\tuple{d}^n\rangle
  \qquad\text{for }1\leq m<n\leq\ell.
\end{equation}
The general representations of dimension vectors $\tuple{d}^{m}$ and $\tuple{d}^{n}$
are again $\mu$-semistable,
so by the slope inequality~$\hom(\tuple{d}^m,\tuple{d}^n)=0$
and hence~$\mathrm{E}_{m,n}=\ext(\tuple{d}^m,\tuple{d}^n)\geq 0$,
explaining the terminology.
We also introduce the partial sums~$\tuple{a}_r\colonequals\sum_{m=1}^r\tuple{d}^m$
for~$r=1,\ldots,\ell-1$,
and define the \emph{cut numbers}
\begin{equation}
  \label{equation:cut-numbers}
  \mathrm{N}_r\colonequals-\langle\tuple{a}_r,\tuple{d}-\tuple{a}_r\rangle
  =\sum_{m=1}^r\sum_{n=r+1}^\ell\mathrm{E}_{m,n}
\end{equation}
(the second equality holds by bilinearity of the Euler form),
and the \emph{slope gaps}
\begin{equation}
  \label{equation:slope-gaps}
  \delta_r\colonequals\mu(\tuple{d}^r)-\mu(\tuple{d}^{r+1}).
\end{equation}

\vspace{-0.5\baselineskip}
\paragraph{The Harder--Narasimhan polygon}
These numbers are conveniently visualised
using the \emph{Harder--Narasimhan polygon} of~$\tuple{d}^*$,
pictured in \cref{figure:polygon,figure:polygon-example}.
Setting~$\tuple{a}_0\colonequals0$ and~$\tuple{a}_\ell\colonequals\tuple{d}$,
this polygon is the piecewise-linear path in the plane
joining the points~$(\lvert\tuple{a}_r\rvert,\theta(\tuple{a}_r))$ for~$r=0,\ldots,\ell$.
As~$\theta(\tuple{d})=0$, both endpoints lie on the horizontal axis.
Its~$r$th edge has slope~$\mu(\tuple{d}^r)$,
so the condition~$\mu(\tuple{d}^1)>\ldots>\mu(\tuple{d}^\ell)$ in \cref{definition:hn-type}
says precisely that the polygon is concave.

In what follows, the \emph{polygon of a representation}~$M$
is the polygon of its Harder--Narasimhan type.

The polygon originates in the theory of vector bundles,
where Shatz proved that,
in an algebraic family of vector bundles on a projective variety,
the Harder--Narasimhan polygon rises under specialisation \cite[Theorem~3]{MR0498573}.
Shatz calls the polygon \emph{convex},
viewing it as the upper boundary of the convex hull of the points of all subsheaves;
as a graph it is concave, which is the convention we use.
For quiver representations,
the polygon and the pointwise partial order comparing polygons
were introduced in \cite[Definition~3.6]{MR1974891}.

\begin{figure}[t]
  \centering
  \begin{tikzpicture}[x=1.05cm,y=0.8cm,>=stealth]
    \coordinate (P0) at (0,0);
    \coordinate (P1) at (1.5,3);
    \coordinate (P2) at (4,4.5);
    \coordinate (P3) at (6,4);
    \coordinate (P4) at (8,0);
    \draw[dashed] (P0) -- (P4);
    \node[below] at (4,0) {slope $\mu(\tuple{d})=0$};
    \draw[gray] (P0) -- (P2);
    \draw[gray] (P2) -- (P4);
    \node[gray] at (2.9,2.2) {$\mu(\tuple{a}_2)$};
    \node[gray] at (5.1,2.0) {$\mu(\tuple{d}-\tuple{a}_2)$};
    \draw[very thick] (P0) -- (P1) -- (P2) -- (P3) -- (P4);
    \node at (0.35,2.1) {$\mu(\tuple{d}^1)$};
    \node at (2.3,4.15) {$\mu(\tuple{d}^2)$};
    \node at (5.4,4.7) {$\mu(\tuple{d}^3)$};
    \node at (7.7,2.2) {$\mu(\tuple{d}^4)$};
    \foreach \p in {P0,P1,P2,P3,P4} {\fill (\p) circle (1.7pt);}
    \node[below left] at (P0) {$0$};
    \node[left] at (P1) {$\tuple{a}_1=\tuple{d}^1$};
    \node[above] at (P2) {$\tuple{a}_2$};
    \node[above right] at (P3) {$\tuple{a}_3$};
    \node[right] at (P4) {$\tuple{d}$};
  \end{tikzpicture}
  \caption{The Harder--Narasimhan polygon of a type~$\tuple{d}^*$ of length~$\ell=4$.
    As~$\theta(\tuple{d})=0$,
    the dashed segment joining the endpoints is horizontal,
    and every interior vertex~$\tuple{a}_r$ lies strictly above it,
    so that~$\mu(\tuple{a}_r)>\mu(\tuple{d})>\mu(\tuple{d}-\tuple{a}_r)$.
  }
  \label{figure:polygon}
\end{figure}

\begin{figure}[t]
  \centering
  \begin{tikzpicture}[x=0.95cm,y=1.1cm,>=stealth]
    \coordinate (P0) at (0,0);
    \coordinate (P1) at (2,2);
    \coordinate (P2) at (5,3);
    \coordinate (P3) at (9,1);
    \coordinate (P4) at (10,0);
    \draw[dashed] (P0) -- (P4);
    \node[below] at (5,0) {slope $\mu(\tuple{d})=0$};
    \draw[gray,thick] (P0) -- (4,2) -- (6,2) -- (P4);
    \foreach \p in {(4,2),(6,2)} {\fill[gray] \p circle (1.7pt);}
    \node[gray,below] at (2.1,0.8) {$\mu(\tuple{e}^1)$};
    \node[gray,below] at (5,1.95) {$\mu(\tuple{e}^2)$};
    \node[gray,below] at (8,0.95) {$\mu(\tuple{e}^3)$};
    \draw[very thick] (P0) -- (P1) -- (P2) -- (P3) -- (P4);
    \node at (0.6,1.5) {$\mu(\tuple{d}^1)$};
    \node at (3.2,2.95) {$\mu(\tuple{d}^2)$};
    \node at (7.3,2.45) {$\mu(\tuple{d}^3)$};
    \node at (10.05,0.8) {$\mu(\tuple{d}^4)$};
    \foreach \p in {P0,P1,P2,P3,P4} {\fill (\p) circle (1.7pt);}
    \node[below left] at (P0) {$0$};
    \node[above left] at (P1) {$\tuple{a}_1=\tuple{d}^1$};
    \node[above] at (P2) {$\tuple{a}_2$};
    \node[above right] at (P3) {$\tuple{a}_3$};
    \node[right] at (P4) {$\tuple{d}$};
  \end{tikzpicture}
  \caption{An explicit instance of \cref{figure:polygon},
    for the $3$-Kronecker quiver with~$\tuple{d}=(5,5)$ and~$\theta=(1,-1)$:
    the Harder--Narasimhan polygon of the type~$\tuple{d}^*=((2,0),(2,1),(1,3),(0,1))$ (black),
    and that of the type~$\tuple{e}^*=((3,1),(1,1),(1,3))$ (grey),
    which lies pointwise below it.
    As~$(5,5)$ is~$\theta$\nobreakdash-semistable,
    the pointwise smallest polygon is the dashed segment,
    the polygon of the trivial type~$((5,5))$.
  }
  \label{figure:polygon-example}
\end{figure}

The slope gap~$\delta_r$ measures the decrease in slope of the polygon at the vertex~$\tuple{a}_r$,
whilst the cut number~$\mathrm{N}_r$ measures the general ext across the cut from~$\tuple{a}_r$ to~$\tuple{d}-\tuple{a}_r$.
The right-hand side of the rigidity inequality \eqref{equation:rigidity-inequality}
uses a weighted sum of these numbers, as explained by \cref{lemma:rewriting}.
The two geometric facts we use below are visible in \cref{figure:polygon,figure:polygon-example}:
\begin{itemize}[noitemsep]
  \item each interior vertex~$\tuple{a}_r$ lies strictly above the dashed segment
    (used in \cref{lemma:positivity});
  \item the general representation of dimension vector~$\tuple{d}$
    has the pointwise smallest polygon,
    so that the slope of its first edge is as small as possible
    (used in \cref{proposition:not-amply-stable}).
\end{itemize}
The second fact is \cref{lemma:semicontinuity} below,
the analogue for quiver representations of Shatz's specialisation theorem,
where we compare polygons with the same endpoints using the pointwise partial order.
As this analogue does not seem to be recorded in the quiver literature yet,
we include a proof;
it is a standard translation of Shatz's arguments.

\begin{lemma}
  \label{lemma:semicontinuity}
  For every concave polygon~$P$
  the locus of representations in~$\repspace{Q,\tuple{d}}$
  whose Harder--Narasimhan polygon lies on or above~$P$
  is closed.
  In particular,
  the polygon of the general representation of dimension vector~$\tuple{d}$
  lies below that of every other Harder--Narasimhan type of~$\tuple{d}$.
\end{lemma}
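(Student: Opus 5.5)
The plan is to translate Shatz's argument: first characterise the polygon of a representation~$M$ as the upper boundary of the convex hull of the points~$(\lvert\dimvect N\rvert,\theta(\dimvect N))$ for all subrepresentations~$N\subseteq M$. Then express the condition ``the polygon lies on or above~$P$'' as a finite union of closed conditions, each saying that~$M$ admits a subrepresentation of a prescribed dimension vector.

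\textbf{Step 1: the polygon as a convex hull.} For~$M$ with Harder--Narasimhan filtration~$0=M_0\subset M_1\subset\cdots\subset M_\ell=M$, we show that every subrepresentation~$N\subseteq M$ gives a point~$(\lvert\dimvect N\rvert,\theta(\dimvect N))$ lying on or below the polygon of~$M$. The argument is the usual one: filter~$N$ by~$N\cap M_r$. The successive quotients~$(N\cap M_r)/(N\cap M_{r-1})$ embed into~$M_r/M_{r-1}$, which is~$\mu$-semistable of slope~$\mu(\tuple{d}^r)$, so each has slope at most~$\mu(\tuple{d}^r)$ (or is zero). Adding these contributions piece by piece and using concavity, the point of~$N$ lies below the polygon. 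Since the vertices~$\tuple{a}_r$ are realised by the~$M_r$, the polygon is exactly the upper boundary of the convex hull of these points.

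\textbf{Step 2: reduction to finitely many closed conditions.} By Step~1, the polygon of~$M$ lies on or above a concave~$P$ if and only if every vertex~$(x,y)$ of~$P$ with~$0<x<\lvert\tuple{d}\rvert$ lies on or below the polygon of~$M$. Since that polygon is the upper hull of points attached to subrepresentations, this condition is equivalent to the following: for each vertex, there exist subrepresentations~$N_1,N_2$ of~$M$ (vertices of the HN polygon, so we may take~$N_1\subseteq N_2$ among the~$M_r$) whose points span a segment passing over~$(x,y)$. The dimension vectors of subrepresentations of~$M$ range over the finite set of subdimension vectors~$\tuple{e}\leq\tuple{d}$. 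Hence the locus is a finite intersection over vertices of finite unions over pairs~$(\tuple{e},\tuple{f})$ satisfying a numerical condition of sets
\[
  Z_{\tuple{e},\tuple{f}}=\{M\mid M\text{ has subrepresentations of dimension vectors }\tuple{e}\text{ and }\tuple{f}\}.
\]
It therefore suffices to show that each set~$\{M\mid M\text{ has a subrepresentation of dimension vector }\tuple{e}\}$ is closed. This is standard: it is the image under the projection~$\repspace{Q,\tuple{d}}\times\prod_i\mathrm{Gr}(e_i,d_i)\to\repspace{Q,\tuple{d}}$ of the closed incidence variety of pairs~$(M,U)$ with~$U$ subrepresentation-stable, and the projection is proper because Grassmannians are projective. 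One point needs care: the segment condition involves convex combinations, so I would phrase it as ``some subdimension vectors~$\tuple{e},\tuple{f}$ occur as subrepresentations with the segment between their points above~$(x,y)$''. This is a finite union of intersections of closed sets, and Step~1 guarantees that it is equivalent to the original condition.

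\textbf{Step 3: the general representation.} There are finitely many HN types, so~$\repspace{Q,\tuple{d}}$ is a finite union of strata and exactly one of them is dense (open), which is the general type~$\tuple{d}^*_\gen$. For any other type~$\tuple{d}^*$ with polygon~$P$, apply Step~2: the closed locus of representations with polygon on or above~$P$ contains the general point, hence everything. Therefore the polygon of the general type lies on or above~$P$ only if~$P$ is below it. Here I need the reverse direction, so I apply the closedness instead to~$P_\gen$ in a slightly different way. The locus of polygons \emph{not} on or above~$P_\gen$ is open. To conclude, I would show directly that for any~$M$ the polygon of~$M$ lies on or above~$P_\gen$. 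The subrepresentations~$M_r^\gen$ of a general representation have dimension vectors~$\tuple{a}^\gen_r$, so the closed locus~$Z$ of representations having subrepresentations of those dimension vectors contains a dense set, hence equals~$\repspace{Q,\tuple{d}}$. Then Step~1 puts the vertices of~$P_\gen$ below the polygon of every~$M$.

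I expect the main obstacle to be Step~1, specifically getting the convex-hull characterisation cleanly with strict versus non-strict slopes. The rest is formal properness.
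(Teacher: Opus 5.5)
Your proof is correct and is essentially the paper's argument. You use the same ingredients: the maximal property obtained by intersecting $N$ with the Harder--Narasimhan filtration, closedness via properness over $\repspace{Q,\tuple{d}}$, finiteness of the set of subdimension vectors, and irreducibility of $\repspace{Q,\tuple{d}}$ for the general type. The only difference is bookkeeping: you use quiver Grassmannians with a vertex-by-vertex criterion on pairs of subrepresentations, whereas the paper uses the universal flag variety and whole filtrations whose path lies on or above $P$.

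One thing needs fixing. The opening of Step~3 is false as written and should be deleted: the closed locus above the polygon of a non-generic type need not contain the general point. The argument you then give, via the closed locus $\bigcap_r\{M\mid M\text{ has a subrepresentation of dimension vector }\tuple{a}^{\gen}_r\}$, is the correct one.
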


\begin{proof}
  For a tuple~$(\tuple{f}^1,\ldots,\tuple{f}^s)$ of non-zero dimension vectors summing to~$\tuple{d}$,
  the locus of representations
  admitting a filtration with successive subquotients of dimension vectors~$\tuple{f}^1,\ldots,\tuple{f}^s$
  is closed in~$\repspace{Q,\tuple{d}}$:
  it is the image of the universal quiver flag variety,
  which is projective over~$\repspace{Q,\tuple{d}}$
  (cf.~the universal quiver Grassmannian in \cite[\S3]{MR1162487}).
  To such a filtration we associate the piecewise-linear path
  joining the points~$(\sum_{i\leq r}\lvert\tuple{f}^i\rvert,\sum_{i\leq r}\theta(\tuple{f}^i))$
  for~$r=0,\ldots,s$;
  for a Harder--Narasimhan filtration this path is the Harder--Narasimhan polygon.

  Now, we claim that the polygon of a representation~$M$ lies on or above~$P$
  if and only if~$M$ admits a filtration whose path lies on or above~$P$.
  The Harder--Narasimhan filtration gives one implication.
  For the converse, it suffices to show that every subrepresentation~$N\subseteq M$
  defines a point~$(|\dimvect{N}|, \theta(\dimvect{N}))$ on or below the polygon of~$M$;
  this is the \emph{maximal property} of the Harder--Narasimhan polygon,
  cf.~\cite[Theorem~2]{MR0498573}.
  The path of any filtration then lies below the concave polygon,
  and above~$P$ by assumption.
  For quiver representations the maximal property follows
  by intersecting~$N$ with the Harder--Narasimhan filtration of~$M$:
  the subquotients of the induced filtration of~$N$
  embed into the semistable subquotients of~$M$,
  so its path consists of steps of width at most~$\lvert\tuple{d}^r\rvert$
  and slope at most~$\mu(\tuple{d}^r)$,
  and every such path ends on or below the polygon.

  As only finitely many tuples~$(\tuple{f}^1,\ldots,\tuple{f}^s)$ occur,
  the locus in the statement is a finite union of closed subsets, hence closed.
  The second statement follows
  because~$\repspace{Q,\tuple{d}}$ is irreducible:
  the closed locus of representations
  whose polygon lies on or above the polygon of the general representation
  contains the dense Harder--Narasimhan stratum,
  hence it is all of~$\repspace{Q,\tuple{d}}$.
  The polygon of the general representation
  corresponds to the Harder--Narasimhan type
  that gives the dense stratum,
  and it is the only one to lie on itself,
  so every other Harder--Narasimhan polygon lies
  above it.
\end{proof}

The cut numbers~$\mathrm{N}_r$ already appeared in the proof of \cite[Proposition~4.9]{MR4954467}.
We will use the fact that~$\mathrm{N}_r\geq 1$ for all~$r=1,\ldots,\ell-1$,
for which we provide a self-contained proof,
as~loc.~cit.~omits those details.

\begin{lemma}
  \label{lemma:positivity}
  Let~$\tuple{d}^*$ be a non-trivial Harder--Narasimhan type for~$\tuple{d}$.
  If~$\tuple{d}$ is~$\theta$\nobreakdash-semistable,
  then~$\mathrm{N}_r\geq 1$ for~$r=1,\ldots,\ell-1$.
\end{lemma}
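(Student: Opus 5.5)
We need $\mathrm{N}_r=-\langle\tuple{a}_r,\tuple{d}-\tuple{a}_r\rangle\geq 1$. The plan has two steps: realise $\mathrm{N}_r$ as a general ext number, then show that this ext cannot vanish.

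First, we show $\mathrm{N}_r\geq 0$ by writing it as a general ext. By \eqref{equation:cut-numbers}, $\mathrm{N}_r=\sum_{m\le r<n}\mathrm{E}_{m,n}$, and each $\mathrm{E}_{m,n}=\ext(\tuple{d}^m,\tuple{d}^n)\geq 0$. In fact we want the stronger statement $\mathrm{N}_r=\ext(\tuple{a}_r,\tuple{d}-\tuple{a}_r)$. For this, take $M=M^1\oplus\dots\oplus M^r$ and $N=M^{r+1}\oplus\dots\oplus M^\ell$ with each $M^m$ general semistable of dimension vector $\tuple{d}^m$. The slope inequality gives $\Hom(M,N)=0$, so $\dim\Ext^1(M,N)=\mathrm{N}_r$. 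We then need this vanishing of $\Hom$ for general representations of dimension vectors $\tuple{a}_r$ and $\tuple{d}-\tuple{a}_r$, not just for these direct sums. Two observations give it. A general representation $M'$ of dimension vector $\tuple{a}_r$ has all its Harder--Narasimhan slopes $>\mu(\tuple{d}^{r+1})$: the polygon of $M'$ lies below the concave polygon built from $\tuple{d}^1,\dots,\tuple{d}^r$ by \cref{lemma:semicontinuity}, so its last slope is at least $\mu(\tuple{d}^r)$. Dually, a general $N'$ of dimension vector $\tuple{d}-\tuple{a}_r$ has all slopes $\le\mu(\tuple{d}^{r+1})$. Hence $\hom(\tuple{a}_r,\tuple{d}-\tuple{a}_r)=0$ and $\mathrm{N}_r=\ext(\tuple{a}_r,\tuple{d}-\tuple{a}_r)\ge 0$.

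It remains to exclude $\mathrm{N}_r=0$. Suppose $\mathrm{N}_r=0$. Then $\ext(\tuple{a}_r,\tuple{d}-\tuple{a}_r)=0$, so by Schofield's results \cite{MR1162487} a general representation of dimension vector $\tuple{d}$ has a subrepresentation of dimension vector $\tuple{a}_r$. An alternative route avoids this: by upper semicontinuity of $\dim\Ext^1$, the pair $(M,N)$ above can be chosen with $\Ext^1(M,N)=0$. Then every representation in the stratum $\stratum_{\tuple{d}^*}$, being an extension of the pieces, is isomorphic to the split sum, because all the relevant $\Ext^1$ between the pieces vanish. The codimension formula \eqref{equation:codimension} gives that the stratum has codimension $\sum\mathrm{E}_{m,n}\ge\mathrm{N}_r$. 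The cleaner argument is via subrepresentations: the locus of representations admitting a subrepresentation of dimension vector $\tuple{a}_r$ is closed, being the image of the universal quiver Grassmannian. When $\ext(\tuple{a}_r,\tuple{d}-\tuple{a}_r)=0$ this locus contains an open set, namely the orbit-dense set of extensions (equivalently by Schofield's criterion). So it is all of $\repspace{Q,\tuple{d}}$.

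Now use semistability of $\tuple{d}$. The general representation $G$ of dimension vector $\tuple{d}$ is $\theta$-semistable, so each subrepresentation has $\theta\le 0$. But $G$ has a subrepresentation of dimension vector $\tuple{a}_r$, and $\theta(\tuple{a}_r)>0$ because interior vertices of the concave polygon lie strictly above the dashed segment in \cref{figure:polygon}. This vertex claim follows from concavity with $\mu(\tuple{d}^1)>\dots>\mu(\tuple{d}^\ell)$ and both endpoints on the axis, since a concave piecewise-linear function vanishing at the endpoints with at least one genuine bend is strictly positive inside. This contradiction gives $\mathrm{N}_r\ge1$.

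The main obstacle is the step ``$\ext(\tuple{a}_r,\tuple{d}-\tuple{a}_r)=0$ implies that the general representation has a subrepresentation of dimension vector $\tuple{a}_r$.'' I would prove it directly. Fix general $M,N$ with $\Ext^1(M,N)=0$. Then the $\mathrm{GL}$-saturation of the block upper-triangular representations with diagonal blocks $M$ and $N$ collapses to the orbit of $M\oplus N$ up to varying $M$ and $N$. A dimension count then shows that the closed locus of representations with a subrepresentation of dimension vector $\tuple{a}_r$ has codimension $\le \ext(\tuple{a}_r,\tuple{d}-\tuple{a}_r)=0$. The required estimate is the standard bound that this locus has codimension at most $-\langle\tuple{a}_r,\tuple{d}-\tuple{a}_r\rangle$ plus a $\hom$-term, and the $\hom$-term vanishes by the first step.
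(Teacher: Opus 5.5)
Your argument is correct and is essentially the paper's proof. Both relate $\mathrm{N}_r$ to $\ext(\tuple{a}_r,\tuple{d}-\tuple{a}_r)$, note $\theta(\tuple{a}_r)>0$, and play Schofield's criterion off against the semistability of the general representation. The paper needs only the inequality $\mathrm{N}_r=\dim\Ext^1(M,N)\geq\ext(\tuple{a}_r,\tuple{d}-\tuple{a}_r)$ for a single pair with $\Hom(M,N)=0$, which your direct sums already give by semicontinuity, so the detour through \cref{lemma:semicontinuity} to get equality is unnecessary.

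You should drop the side remarks about extensions splitting, namely the ``alternative route'' and the ``collapses to the orbit of $M\oplus N$'' step. They get the direction of $\Ext^1$ wrong: $\mathrm{N}_r=0$ kills $\Ext^1(M,N)$, whereas extensions with $M$ as subrepresentation are classified by $\Ext^1(N,M)$. The criterion is instead justified by citing Schofield, or by the universal-Grassmannian bound $\codim\leq\ext(\tuple{a}_r,\tuple{d}-\tuple{a}_r)$ that you state at the end.
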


\begin{proof}
  Let~$M\in\stratum_{\tuple{d}^*}$,
  with Harder--Narasimhan filtration~$0=M_0\subset\ldots\subset M_\ell=M$,
  and set~$F\colonequals M_r$ and~$G\colonequals M/M_r$,
  of dimension vectors~$\tuple{a}_r$ and~$\tuple{d}-\tuple{a}_r$.
  As~$F$ is built from semistable pieces of slope at least~$\mu(\tuple{d}^r)>\mu(\tuple{d}^{r+1})$,
  and those of~$G$ have slope at most~$\mu(\tuple{d}^{r+1})$,
  we have~$\Hom(F,G)=0$,
  so
  \begin{equation}
    \mathrm{N}_r=-\langle\tuple{a}_r,\tuple{d}-\tuple{a}_r\rangle=\dim\Ext^1(F,G)\geq\ext(\tuple{a}_r,\tuple{d}-\tuple{a}_r).
  \end{equation}
  Moreover~$\mu(\tuple{a}_r)>\mu(\tuple{d})$,
  since~$\mu(\tuple{d})$ is a weighted average of~$\mu(\tuple{a}_r)\geq\mu(\tuple{d}^r)$
  and~$\mu(\tuple{d}-\tuple{a}_r)\leq\mu(\tuple{d}^{r+1})$;
  as~$\mu(\tuple{d})=0$,
  this says that the vertex~$\tuple{a}_r$ lies strictly above the dashed segment
  in \cref{figure:polygon,figure:polygon-example}.
  As~$\tuple{d}$ is~$\theta$\nobreakdash-semistable,
  the general representation of dimension vector~$\tuple{d}$ is~$\theta$\nobreakdash-semistable,
  hence has no subrepresentation of slope larger than~$\mu(\tuple{d})$,
  in particular none of dimension vector~$\tuple{a}_r$.
  As the general representation of dimension vector~$\tuple{d}$
  admits a subrepresentation of dimension vector~$\tuple{a}_r$
  if and only if~$\ext(\tuple{a}_r,\tuple{d}-\tuple{a}_r)=0$
  \cite[Theorems~3.3 and~5.4]{MR1162487},
  this forces~$\ext(\tuple{a}_r,\tuple{d}-\tuple{a}_r)\geq 1$.
  Therefore~$\mathrm{N}_r\geq 1$.
\end{proof}

\begin{lemma}
  \label{lemma:rewriting}
  Let~$\tuple{d}^*$ be a non-trivial Harder--Narasimhan type.
  Then \eqref{equation:rigidity-inequality}
  is equivalent to~$\sum_{r=1}^{\ell-1}\delta_r(\mathrm{N}_r-1)>0$.
\end{lemma}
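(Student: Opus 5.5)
The plan is to rewrite both sides of \eqref{equation:rigidity-inequality} in terms of the slope gaps~$\delta_r$ and cut numbers~$\mathrm{N}_r$ by telescoping, and then compare.

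\textbf{Left-hand side.} It telescopes directly:
\begin{equation*}
  \mu(\tuple{d}^1)-\mu(\tuple{d}^\ell)=\sum_{r=1}^{\ell-1}\delta_r .
\end{equation*}

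\textbf{Right-hand side.} For each pair~$m<n$ I would write the slope difference as a telescoping sum,
\begin{equation*}
  \mu(\tuple{d}^m)-\mu(\tuple{d}^n)=\sum_{r=m}^{n-1}\delta_r .
\end{equation*}
Substituting, the right-hand side becomes
\begin{equation*}
  \sum_{1\leq m<n\leq\ell}\ \sum_{r=m}^{n-1}\delta_r\,\mathrm{E}_{m,n},
\end{equation*}
recalling that~$\mathrm{E}_{m,n}=-\langle\tuple{d}^m,\tuple{d}^n\rangle$.

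\textbf{Exchanging the order of summation.} A triple~$(m,n,r)$ appears exactly when~$m\leq r<n$, that is, when~$1\leq m\leq r$ and~$r+1\leq n\leq\ell$. So for fixed~$r$ the coefficient of~$\delta_r$ is
\begin{equation*}
  \sum_{m=1}^{r}\sum_{n=r+1}^{\ell}\mathrm{E}_{m,n}=\mathrm{N}_r,
\end{equation*}
where the equality is the second expression for the cut numbers in \eqref{equation:cut-numbers}. Hence the right-hand side equals~$\sum_{r=1}^{\ell-1}\delta_r\mathrm{N}_r$.

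\textbf{Conclusion.} The inequality \eqref{equation:rigidity-inequality} now reads
\begin{equation*}
  \sum_{r=1}^{\ell-1}\delta_r<\sum_{r=1}^{\ell-1}\delta_r\mathrm{N}_r .
\end{equation*}
Moving everything to one side gives exactly~$\sum_{r=1}^{\ell-1}\delta_r(\mathrm{N}_r-1)>0$. All steps are reversible identities, so this is an equivalence and not merely an implication.

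There is no real obstacle. The only point needing care is the index bookkeeping when exchanging the sums, i.e.\ checking that the condition~$m\leq r<n$ yields precisely the double sum defining~$\mathrm{N}_r$.
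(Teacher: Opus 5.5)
Your proposal is correct and matches the paper's proof: you telescope both sides in terms of the slope gaps $\delta_r$, swap the order of summation over $m\leq r<n$ to identify the coefficient of $\delta_r$ with $\mathrm{N}_r$, and rearrange.
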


\begin{proof}
  We have that~$\mu(\tuple{d}^m)-\mu(\tuple{d}^n)=\sum_{r=m}^{n-1}\delta_r$,
  so the right-hand side of \eqref{equation:rigidity-inequality} can be rewritten as
  \begin{equation}
    \label{equation:rewriting}
    \sum_{m<n}\mathrm{E}_{m,n}\sum_{r=m}^{n-1}\delta_r
    =
    \sum_{r=1}^{\ell-1}\delta_r\sum_{m\leq r<n}\mathrm{E}_{m,n}
    =
    \sum_{r=1}^{\ell-1}\delta_r\mathrm{N}_r
  \end{equation}
  by swapping the order of the summation.
  The left-hand side of \eqref{equation:rigidity-inequality} is~$\mu(\tuple{d}^1)-\mu(\tuple{d}^\ell)=\sum_{r=1}^{\ell-1}\delta_r$,
  so \eqref{equation:rigidity-inequality} is equivalent to
  $\sum_{r=1}^{\ell-1}\delta_r\mathrm{N}_r>\sum_{r=1}^{\ell-1}\delta_r$,
  that is, to~$\sum_{r=1}^{\ell-1}\delta_r(\mathrm{N}_r-1)>0$.
\end{proof}

This leads to the following way of proving \cref{theorem:main}.

\begin{proposition}
  \label{proposition:reduction}
  Let~$\tuple{d}^*$ be a non-trivial Harder--Narasimhan type for~$\tuple{d}$.
  If~$\tuple{d}$ is~$\theta$\nobreakdash-semistable,
  then \eqref{equation:rigidity-inequality} holds,
  unless~$\mathrm{N}_r=1$ for all~$r=1,\ldots,\ell-1$.
\end{proposition}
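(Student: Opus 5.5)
The proof combines \cref{lemma:rewriting} with \cref{lemma:positivity}. By \cref{lemma:rewriting}, the rigidity inequality \eqref{equation:rigidity-inequality} for the non-trivial type~$\tuple{d}^*$ is equivalent to
\begin{equation*}
  \sum_{r=1}^{\ell-1}\delta_r(\mathrm{N}_r-1)>0.
\end{equation*}
So it suffices to show that this sum is strictly positive whenever some cut number~$\mathrm{N}_r$ differs from~$1$.

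First, every summand is non-negative. The slope gaps satisfy~$\delta_r=\mu(\tuple{d}^r)-\mu(\tuple{d}^{r+1})>0$, because \cref{definition:hn-type} requires the slopes of a Harder--Narasimhan type to be strictly decreasing. Since~$\tuple{d}$ is~$\theta$\nobreakdash-semistable, \cref{lemma:positivity} gives~$\mathrm{N}_r\geq 1$ for all~$r=1,\ldots,\ell-1$. Hence~$\delta_r(\mathrm{N}_r-1)\geq 0$ for each~$r$.

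Second, assume we are not in the excluded case, so there is an index~$r_0$ with~$\mathrm{N}_{r_0}\neq 1$. Then~$\mathrm{N}_{r_0}\geq 2$, and since~$\delta_{r_0}>0$ the summand~$\delta_{r_0}(\mathrm{N}_{r_0}-1)$ is strictly positive. Adding the remaining non-negative summands keeps the total strictly positive, so \eqref{equation:rigidity-inequality} holds by \cref{lemma:rewriting}.

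There is no real obstacle: all the content sits in \cref{lemma:positivity} and \cref{lemma:rewriting}. The one point to check is that the strictness of the slope inequalities in \cref{definition:hn-type} makes every~$\delta_r$ strictly positive, not merely non-negative, since that is what turns a single cut number~$\mathrm{N}_{r_0}\geq 2$ into a strict inequality.
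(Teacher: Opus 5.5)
Your proof is correct and follows the paper's argument: \cref{lemma:positivity} gives $\mathrm{N}_r\geq 1$, the strictly decreasing slopes give $\delta_r>0$, so $\sum_{r=1}^{\ell-1}\delta_r(\mathrm{N}_r-1)\geq 0$ with equality only when every $\mathrm{N}_r=1$, and \cref{lemma:rewriting} converts this back into \eqref{equation:rigidity-inequality}. Your step from $\mathrm{N}_{r_0}\neq 1$ to $\mathrm{N}_{r_0}\geq 2$ tacitly uses that the cut numbers are integers, but $\mathrm{N}_{r_0}>1$ is already enough.
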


\begin{proof}
  By \cref{lemma:positivity} we have~$\mathrm{N}_r\geq 1$,
  so~$\sum_{r=1}^{\ell-1}\delta_r(\mathrm{N}_r-1)\geq 0$ because~$\delta_r>0$.
  This sum vanishes if and only if~$\mathrm{N}_r=1$ for all~$r$,
  so otherwise \eqref{equation:rigidity-inequality} holds.
\end{proof}

In particular,
note that \eqref{equation:rigidity-inequality} always holds in its non-strict version
for Harder--Narasimhan types of~$\theta$\nobreakdash-semistable dimension vectors.

\subsection{Constructing the special Harder--Narasimhan type}
\label{subsection:construction-all-cuts-one}
The following lemma will provide the crucial step in the proof.
Recall that the slope of a dimension vector is~$\mu(\tuple{e})=\theta(\tuple{e})/\lvert\tuple{e}\rvert$,
and that Harder--Narasimhan types are taken with respect to it.
\begin{lemma}
  \label{lemma:additivity}
  Let~$\tuple{d}^*=(\tuple{d}^1,\ldots,\tuple{d}^\ell)$ be a non-trivial Harder--Narasimhan type for~$\tuple{d}$.
  Write~$\tuple{e}\colonequals\tuple{d}-\tuple{d}^1$,
  and~$\tuple{e}^*\colonequals(\tuple{d}^2,\ldots,\tuple{d}^\ell)$.
  Then~$\tuple{e}^*$ is a Harder--Narasimhan type for~$\tuple{e}$,
  and
  \begin{equation}
    \label{equation:additivity}
    \codim_{\repspace{Q,\tuple{d}}}\stratum_{\tuple{d}^*}
    =
    -\langle\tuple{d}^1,\tuple{d}-\tuple{d}^1\rangle
    +\codim_{\repspace{Q,\tuple{d}-\tuple{d}^1}}\stratum_{\tuple{e}^*}.
  \end{equation}
\end{lemma}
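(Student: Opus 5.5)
The lemma has two parts: $\tuple{e}^*$ is a Harder--Narasimhan type, and the codimension formula \eqref{equation:additivity} holds. I would prove each directly from the definitions, using only \cref{definition:hn-type} and the codimension formula \eqref{equation:codimension}.

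\emph{The tuple $\tuple{e}^*$.} The tuple $(\tuple{d}^2,\ldots,\tuple{d}^\ell)$ consists of non-zero $\mu$-semistable dimension vectors. It has strictly decreasing slopes, and its entries sum to $\tuple{d}-\tuple{d}^1=\tuple{e}$. These are exactly the conditions in \cref{definition:hn-type}. One subtlety should be addressed. The paper normalises $\theta(\tuple{d})=0$, whereas $\theta(\tuple{e})=-\theta(\tuple{d}^1)$ need not vanish. However, the notion of Harder--Narasimhan type and the codimension formula depend only on the slope $\mu=\theta/\lvert-\rvert$ and make sense for arbitrary $\theta$. If one insists on the normalisation, replace $\theta$ by $\theta'\colonequals\theta-c\lvert-\rvert$ with $c=\mu(\tuple{e})$, working over $\mathbb{Q}$ or scaling to make it integral. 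This shifts every slope by the constant $c$. It therefore preserves $\mu$-semistability of each $\tuple{d}^r$ and the strict ordering of slopes, and hence preserves Harder--Narasimhan types and strata.

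\emph{The codimension formula.} Apply \eqref{equation:codimension} (\cite[Proposition~3.4]{MR1974891}) to both strata. This gives
\[
\codim_{\repspace{Q,\tuple{d}}}\stratum_{\tuple{d}^*}
=\sum_{n=2}^{\ell}\mathrm{E}_{1,n}+\sum_{2\leq m<n\leq\ell}\mathrm{E}_{m,n},
\]
where we split off the terms with $m=1$. The second sum is exactly $\codim_{\repspace{Q,\tuple{e}}}\stratum_{\tuple{e}^*}$, by \eqref{equation:codimension} for the type $\tuple{e}^*$ of $\tuple{e}$. By bilinearity of the Euler form, the first sum equals $-\langle\tuple{d}^1,\sum_{n\geq2}\tuple{d}^n\rangle=-\langle\tuple{d}^1,\tuple{d}-\tuple{d}^1\rangle$, which is $\mathrm{N}_1$ in the notation of \eqref{equation:cut-numbers}.

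\emph{Degenerate and obstacle cases.} When $\ell=2$, the type $\tuple{e}^*$ is trivial. Its stratum is then the open semistable locus, of codimension $0$, which matches the empty sum. The main (mild) obstacle is justifying that \eqref{equation:codimension} applies to $\tuple{e}$ without the normalisation $\theta(\tuple{e})=0$. The slope-shift argument above handles this, and alternatively Reineke's proposition is stated for arbitrary slope functions. No geometric input beyond \eqref{equation:codimension} is needed.
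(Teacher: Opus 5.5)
Your proposal is correct and follows the paper's own argument: check the conditions of \cref{definition:hn-type} for the tail $(\tuple{d}^2,\ldots,\tuple{d}^\ell)$, then split off the $m=1$ terms in \eqref{equation:codimension} and use bilinearity of the Euler form. Your remarks on $\theta(\tuple{e})\neq 0$ and on the case $\ell=2$ are correct additions that the paper leaves implicit, since both the definition of Harder--Narasimhan types and Reineke's codimension formula are stated for arbitrary slopes.
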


\begin{proof}
  For the first part,
  observe that subsequences of Harder--Narasimhan types are again Harder--Narasimhan types,
  so~$\tuple{e}^*$ is a Harder--Narasimhan type for~$\tuple{e}$,
  with associated stratum~$\stratum_{\tuple{e}^*}$.
  The additivity follows by regrouping the terms in \eqref{equation:codimension}
  and using the bilinearity of the Euler form.
\end{proof}

We can now prove the following result.
We will come back to the fact that we only consider~$\mathrm{N}_1=1$
(and not~$\mathrm{N}_r=1$ for some~$r\in\{1,\ldots,\ell-1\}$)
in \cref{corollary:equivalence,remark:curiosity}.
For the moment
it suffices to observe that
the proposition does not say that~$\stratum_{\tuple{d}^*}$ has codimension~1;
it only asserts the existence of such a stratum.

\begin{proposition}
  \label{proposition:not-amply-stable}
  Let~$\tuple{d}^*=(\tuple{d}^1,\ldots,\tuple{d}^\ell)$
  be a non-trivial Harder--Narasimhan type
  for which~$\mathrm{N}_1=1$.
  Then~$\repspace{Q,\tuple{d}}$ admits a non-trivial Harder--Narasimhan stratum of codimension~1;
  in particular~$\tuple{d}$ is not~amply $\theta$\nobreakdash-stable.
\end{proposition}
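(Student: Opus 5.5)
The plan is to build the codimension-one stratum by keeping the first piece $\tuple{d}^1$ and replacing the tail $(\tuple{d}^2,\ldots,\tuple{d}^\ell)$ by the \emph{generic} Harder--Narasimhan type of $\tuple{e}\colonequals\tuple{d}-\tuple{d}^1$. Let $\tuple{e}_\gen^*=(\tuple{e}^1,\ldots,\tuple{e}^s)$ be the Harder--Narasimhan type of the general representation of dimension vector $\tuple{e}$. Since $\repspace{Q,\tuple{e}}$ is irreducible, its stratum $\stratum_{\tuple{e}_\gen^*}$ is dense, so it has codimension $0$ in $\repspace{Q,\tuple{e}}$. We allow $s=1$, which is the case where $\tuple{e}$ is itself $\mu$-semistable.

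The candidate type is $\tuple{d}'^*\colonequals(\tuple{d}^1,\tuple{e}^1,\ldots,\tuple{e}^s)$. All its pieces are non-zero $\mu$-semistable dimension vectors summing to $\tuple{d}$. The slopes $\mu(\tuple{e}^1)>\ldots>\mu(\tuple{e}^s)$ are strictly decreasing, so the only thing to check is $\mu(\tuple{d}^1)>\mu(\tuple{e}^1)$.

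This slope comparison is the key step, and I would prove it with \cref{lemma:semicontinuity} applied to $\tuple{e}$. By \cref{lemma:additivity}, $\tuple{e}^*=(\tuple{d}^2,\ldots,\tuple{d}^\ell)$ is a Harder--Narasimhan type of $\tuple{e}$. Hence the polygon of $\tuple{e}_\gen^*$ lies on or below the polygon of $\tuple{e}^*$. The two polygons share both endpoints, and the generic polygon is concave and starts at the origin. Its initial slope therefore cannot exceed the initial slope of the polygon above it, which gives $\mu(\tuple{e}^1)\le\mu(\tuple{d}^2)$. Combined with $\mu(\tuple{d}^2)<\mu(\tuple{d}^1)$ from the original type, this yields $\mu(\tuple{e}^1)<\mu(\tuple{d}^1)$. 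The case $s=1$ is covered by the same argument: there the generic polygon is the straight segment and $\mu(\tuple{e}^1)=\mu(\tuple{e})$. Thus $\tuple{d}'^*$ is a Harder--Narasimhan type of length $s+1\ge2$, so it is non-trivial.

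Next I compute the codimension of $\stratum_{\tuple{d}'^*}$ by the additivity formula \eqref{equation:additivity}. Since $\tuple{d}'^*$ has first piece $\tuple{d}^1$ and tail $\tuple{e}_\gen^*$, this gives
\begin{equation*}
\codim_{\repspace{Q,\tuple{d}}}\stratum_{\tuple{d}'^*}=-\langle\tuple{d}^1,\tuple{d}-\tuple{d}^1\rangle+\codim_{\repspace{Q,\tuple{e}}}\stratum_{\tuple{e}_\gen^*}=\mathrm{N}_1+0=1.
\end{equation*}
Here $\mathrm{N}_1=-\langle\tuple{a}_1,\tuple{d}-\tuple{a}_1\rangle$ with $\tuple{a}_1=\tuple{d}^1$, and $\mathrm{N}_1=1$ by hypothesis. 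The stratum is non-empty, since strata of Harder--Narasimhan types with semistable pieces are non-empty, which is implicit in the codimension formula \eqref{equation:codimension}. A non-trivial stratum lies in the non-stable locus, so that locus has codimension at most $1$, and $\tuple{d}$ is not amply $\theta$-stable.
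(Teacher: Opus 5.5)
Your proposal is correct and follows the paper's proof essentially step for step: you replace the tail by the generic Harder--Narasimhan type of $\tuple{d}-\tuple{d}^1$, use \cref{lemma:semicontinuity} to get $\mu(\tuple{e}^1)\le\mu(\tuple{d}^2)<\mu(\tuple{d}^1)$, and conclude via \cref{lemma:additivity} that the new non-trivial stratum has codimension $\mathrm{N}_1+0=1$. Your explicit initial-slope comparison of the two polygons and your remark on non-emptiness of the stratum spell out details the paper leaves implicit.
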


\begin{proof}
  Write~$\tuple{e}\colonequals\tuple{d}-\tuple{d}^1$.
  Consider the Harder--Narasimhan stratification of~$\repspace{Q,\tuple{e}}$,
  for the slope stability~$\mu$,
  and let~$\tuple{e}_\gen^*=(\tuple{e}^2,\ldots,\tuple{e}^t)$
  be the generic Harder--Narasimhan type,
  i.e.,
  the type associated to the
  unique dense open
  Harder--Narasimhan stratum of codimension~0
  in~$\repspace{Q,\tuple{e}}$.
  If~$\tuple{e}$ is~$\mu$-semistable,
  then~$\tuple{e}_\gen^*=(\tuple{e})$ and~$t=2$,
  and otherwise~$t\geq 3$;
  the construction below treats both cases uniformly.

  Define~$\tuple{f}^*\colonequals(\tuple{d}^1,\tuple{e}^2,\ldots,\tuple{e}^t)$.
  We claim that~$\tuple{f}^*$ is a non-trivial
  Harder--Narasimhan type for~$\tuple{d}$,
  whose Harder--Narasimhan stratum~$\stratum_{\tuple{f}^*}$ has codimension~1.

  By construction,~$\tuple{d}^1,\tuple{e}^2,\ldots,\tuple{e}^t$
  are non-zero and~$\mu$-semistable,
  and they sum to~$\tuple{d}$.
  We also have that~$\mu(\tuple{e}^2)>\ldots>\mu(\tuple{e}^t)$,
  so it suffices to check that~$\mu(\tuple{d}^1)>\mu(\tuple{e}^2)$.
  Now,
  $(\tuple{d}^2,\ldots,\tuple{d}^\ell)$ is itself a Harder--Narasimhan type for~$\tuple{e}$,
  realised by representations
  such that their subrepresentations have slope at most~$\mu(\tuple{d}^2)$.
  By \cref{lemma:semicontinuity},
  the generic representation of dimension vector~$\tuple{e}$ has the pointwise smallest polygon,
  hence the shallowest first edge and the smallest maximal slope;
  its first piece~$\tuple{e}^2$ thus satisfies~$\mu(\tuple{e}^2)\leq\mu(\tuple{d}^2)<\mu(\tuple{d}^1)$.

  The claim on the codimension follows for this choice,
  as by \cref{lemma:additivity}
  \begin{equation}
    \label{equation:codimension-1}
    \begin{aligned}
      \codim_{\repspace{Q,\tuple{d}}}\stratum_{\tuple{f}^*}
      &=-\langle\tuple{d}^1,\tuple{d}-\tuple{d}^1\rangle+\codim_{\repspace{Q,\tuple{d}-\tuple{d}^1}}\stratum_{\tuple{e}_\gen^*} \\
      &=\mathrm{N}_1+0=1,
    \end{aligned}
  \end{equation}
  using that~$-\langle\tuple{d}^1,\tuple{d}-\tuple{d}^1\rangle=\mathrm{N}_1=1$ by assumption,
  and that the general Harder--Narasimhan type~$\tuple{e}_\gen^*$ for~$\tuple{d}-\tuple{d}^1$ has codimension~0.
  The stratum~$\stratum_{\tuple{f}^*}$
  thus shows that~$\tuple{d}$ is not amply~$\theta$\nobreakdash-stable.
\end{proof}

We record the following observation.

\begin{lemma}
  \label{lemma:codim-1-means-all-cuts-1}
  Let~$\tuple{d}$ be a~$\theta$\nobreakdash-semistable dimension vector.
  Let~$\tuple{d}^*$ be a non-trivial Harder--Narasimhan type for~$\tuple{d}$ of length~$\ell$.
  If~$\codim\stratum_{\tuple{d}^*}=1$,
  then~$\mathrm{N}_r=1$ for~$r=1,\ldots,\ell-1$.
\end{lemma}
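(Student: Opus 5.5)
We argue by comparing the codimension with the cut numbers. By \eqref{equation:codimension} the codimension of $\stratum_{\tuple{d}^*}$ is the sum of all general ext numbers:
\begin{equation*}
  \codim\stratum_{\tuple{d}^*}=\sum_{1\leq m<n\leq\ell}\mathrm{E}_{m,n}.
\end{equation*}
Each $\mathrm{E}_{m,n}$ is non-negative, as observed after the definition of the general ext numbers.

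For any fixed $r\in\{1,\ldots,\ell-1\}$, the cut number $\mathrm{N}_r$ is, by \eqref{equation:cut-numbers}, the sum of the $\mathrm{E}_{m,n}$ over the pairs with $m\leq r<n$. This is a sub-sum of the full sum above. Since all terms are non-negative, we obtain
\begin{equation*}
  \mathrm{N}_r\leq\codim\stratum_{\tuple{d}^*}=1.
\end{equation*}
On the other hand, $\tuple{d}$ is $\theta$\nobreakdash-semistable and $\tuple{d}^*$ is non-trivial, so \cref{lemma:positivity} gives $\mathrm{N}_r\geq 1$. Combining the two bounds yields $\mathrm{N}_r=1$ for every $r$, which proves the statement.

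There is no real obstacle here. The only points to check are the non-negativity $\mathrm{E}_{m,n}=\ext(\tuple{d}^m,\tuple{d}^n)\geq0$, which uses the vanishing of $\hom$ between general semistable representations of decreasing slope (already recorded in the paper), and the applicability of \cref{lemma:positivity}, which is exactly where the semistability hypothesis on $\tuple{d}$ enters.
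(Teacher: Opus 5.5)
Your proof is correct and uses the same ingredients as the paper: the codimension formula \eqref{equation:codimension}, non-negativity of the $\mathrm{E}_{m,n}$, and \cref{lemma:positivity}. The only difference is that the paper first isolates the unique non-zero entry $\mathrm{E}_{m_0,n_0}=1$ and deduces $m_0=1$, $n_0=\ell$, whereas you bound each $\mathrm{N}_r$ above by the total sum, which is slightly more economical and does not need integrality of the $\mathrm{E}_{m,n}$.
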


\begin{proof}
  By \eqref{equation:codimension} we have~$\sum_{m<n}\mathrm{E}_{m,n}=1$ with each~$\mathrm{E}_{m,n}\geq 0$,
  so a single entry~$\mathrm{E}_{m_0,n_0}$ equals~1 and the others vanish.
  Then~$\mathrm{N}_r=\sum_{m\leq r<n}\mathrm{E}_{m,n}$ equals~1 if~$m_0\leq r<n_0$, and~0 otherwise.
  But~$\mathrm{N}_r\geq 1$ for all~$r$ by \cref{lemma:positivity},
  which forces~$m_0=1$ and~$n_0=\ell$,
  hence~$\mathrm{N}_r=1$ for~$r=1,\ldots,\ell-1$.
\end{proof}

\subsection{Proof of \texorpdfstring{\cref{theorem:main}}{Theorem \ref{theorem:main}}}
\label{subsection:proof}
We can now come to the proof of \cref{theorem:main}.

\begin{proof}[Proof of \cref{theorem:main}]
  Let~$\tuple{d}$ be amply~$\theta$\nobreakdash-stable;
  in particular~$\tuple{d}$ is~$\theta$\nobreakdash-semistable.
  Let~$\tuple{d}^*$ be a non-trivial Harder--Narasimhan type of length~$\ell$.
  By \cref{proposition:reduction}
  the rigidity inequality \eqref{equation:rigidity-inequality} holds,
  unless~$\mathrm{N}_r=1$ for all~$r=1,\ldots,\ell-1$.
  In the latter case~$\mathrm{N}_1=1$,
  so by \cref{proposition:not-amply-stable}
  the dimension vector~$\tuple{d}$ is not amply~$\theta$\nobreakdash-stable,
  a contradiction.
\end{proof}

Let us now state the following corollary,
which makes precise how the first cut number controls ample stability.
\begin{corollary}
  \label{corollary:equivalence}
  Let~$\tuple{d}$ be a dimension vector
  for which the~$\theta$\nobreakdash-semistable and~$\theta$\nobreakdash-stable loci of~$\repspace{Q,\tuple{d}}$ coincide
  (e.g., a~$\theta$\nobreakdash-coprime dimension vector).
  The following are equivalent:
  \begin{enumerate}
    \item $\tuple{d}$ is amply~$\theta$\nobreakdash-stable;
    \item for every non-trivial Harder--Narasimhan type we have that~$\mathrm{N}_1\geq 2$;
    \item for every non-trivial Harder--Narasimhan type the inequality~\eqref{equation:rigidity-inequality} holds.
  \end{enumerate}
\end{corollary}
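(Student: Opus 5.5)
I will prove the cycle of implications (1)$\Rightarrow$(3)$\Rightarrow$(1) together with (1)$\Leftrightarrow$(2). Throughout, the standing assumption is that the $\theta$\nobreakdash-semistable and $\theta$\nobreakdash-stable loci coincide. Under this assumption, the non-stable locus of $\repspace{Q,\tuple{d}}$ is exactly the union of the non-trivial Harder--Narasimhan strata, and there are only finitely many of these.

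\textbf{(1)$\Rightarrow$(3).} This is \cref{theorem:main} and needs no further work.

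\textbf{(3)$\Rightarrow$(1).} This is the second implication of \eqref{equation:implications}, namely \cite[Lemma~4.10]{MR4954467}. I would rather reprove it directly, since its footnote indicates that only the coincidence of the loci is used, not coprimality.

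First suppose $\tuple{d}$ is not $\theta$\nobreakdash-semistable. Then the non-stable locus is all of $\repspace{Q,\tuple{d}}$, hence has codimension~$0$. The generic type is then non-trivial with $\mathrm{E}_{m,n}$ summing to $0$. The inequality \eqref{equation:rigidity-inequality} would then read $\mu(\tuple{d}^1)-\mu(\tuple{d}^\ell)<0$, which fails. So (3) forces semistability.

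Now assume $\tuple{d}$ is semistable. If (1) fails, then since the non-stable locus is a finite union of non-trivial strata, some stratum $\stratum_{\tuple{d}^*}$ has codimension $\leq 1$. Codimension $0$ is impossible, because the dense stratum is the trivial one. So the codimension is exactly $1$. By \cref{lemma:codim-1-means-all-cuts-1} we get $\mathrm{N}_r=1$ for all $r$. Then \cref{lemma:rewriting} shows that \eqref{equation:rigidity-inequality} fails for this $\tuple{d}^*$, contradicting (3).

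\textbf{(1)$\Rightarrow$(2).} Suppose some non-trivial type has $\mathrm{N}_1\leq 1$. Ample stability gives semistability, so \cref{lemma:positivity} yields $\mathrm{N}_1\geq 1$, hence $\mathrm{N}_1=1$. Then \cref{proposition:not-amply-stable} contradicts (1).

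\textbf{(2)$\Rightarrow$(1).} I first need semistability. If $\tuple{d}$ were unstable, the generic type would have all $\mathrm{E}_{m,n}=0$, so $\mathrm{N}_1=0<2$, contradicting (2). Now assume $\tuple{d}$ is semistable and that a codimension-$1$ non-trivial stratum exists, as in the previous step. \Cref{lemma:codim-1-means-all-cuts-1} gives $\mathrm{N}_1=1$, again contradicting (2).

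The main care point is ensuring semistability in the reverse directions, handled by the generic-type argument above. Everything else reduces to the already established lemmas.
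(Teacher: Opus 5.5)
Your proof is correct and rests on the same ingredients as the paper's: \cref{proposition:not-amply-stable} with \cref{lemma:positivity} for (1)$\Rightarrow$(2), the generic-type argument to force semistability, and the codimension-one stratum combined with \cref{lemma:codim-1-means-all-cuts-1,lemma:rewriting} for (3)$\Rightarrow$(1). The only difference is organisational. The paper runs the cycle (1)$\Rightarrow$(2)$\Rightarrow$(3)$\Rightarrow$(1), obtaining (2)$\Rightarrow$(3) type by type from $\delta_1(\mathrm{N}_1-1)>0$ and \cref{lemma:positivity,lemma:rewriting}, whereas you route both equivalences through (1), citing \cref{theorem:main} for (1)$\Rightarrow$(3) and rerunning the codimension-one argument for (2)$\Rightarrow$(1).
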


\begin{proof}
  Assume that~$\tuple{d}$ is amply~$\theta$\nobreakdash-stable.
  If there exists a non-trivial Harder--Narasimhan type for which~$\mathrm{N}_1=1$,
  then by \cref{proposition:not-amply-stable} we get a contradiction
  (and~$\mathrm{N}_1=0$ is excluded by \cref{lemma:positivity},
  as~$\tuple{d}$ is in particular~$\theta$\nobreakdash-semistable).

  Assume that for every non-trivial Harder--Narasimhan type we have that~$\mathrm{N}_1\geq 2$.
  Then~$\tuple{d}$ is~$\theta$\nobreakdash-semistable:
  otherwise the generic Harder--Narasimhan type is non-trivial,
  and all its general ext numbers vanish by \eqref{equation:codimension},
  so that its first cut number is~0.
  Now \eqref{equation:rigidity-inequality} follows from \cref{lemma:rewriting,lemma:positivity}.

  For the final implication we argue by the contrapositive.
  This is \cite[Lemma~4.10]{MR4954467},
  where~$\theta$\nobreakdash-coprimality is a standing assumption,
  only used to ensure that the semistable and stable loci coincide.
  Since our hypothesis is weaker, we include the short argument.
  Assume that~$\tuple{d}$ is not amply $\theta$\nobreakdash-stable.
  If~$\tuple{d}$ is not~$\theta$\nobreakdash-semistable,
  then the generic Harder--Narasimhan type is non-trivial,
  and all its general ext numbers vanish by \eqref{equation:codimension},
  so the right-hand side of \eqref{equation:rigidity-inequality} vanishes
  and the inequality fails.
  If~$\tuple{d}$ is~$\theta$\nobreakdash-semistable,
  then every non-trivial Harder--Narasimhan stratum has positive codimension.
  By assumption, the complement of the stable locus is the non-semistable locus,
  so there exists a non-trivial Harder--Narasimhan type~$\tuple{d}^*$
  whose stratum~$\stratum_{\tuple{d}^*}$ has codimension~1.
  But then \eqref{equation:rigidity-inequality} does not hold by \cref{lemma:codim-1-means-all-cuts-1}
  and \cref{lemma:rewriting}.
\end{proof}

\begin{remark}
  \label{remark:curiosity}
  Note that \cref{lemma:positivity,lemma:rewriting} show that,
  for a fixed non-trivial Harder--Narasimhan type~$\tuple{d}^*$,
  we have that~\eqref{equation:rigidity-inequality} holds
  if and only if~$\mathrm{N}_r\geq 2$ for \emph{some}~$r\in\{1,\ldots,\ell-1\}$.
  The weaker condition~$\mathrm{N}_1=1$ need not make \eqref{equation:rigidity-inequality} fail for~$\tuple{d}^*$ itself;
  nevertheless, by \cref{proposition:not-amply-stable},
  a single type with~$\mathrm{N}_1=1$ already forces~$\tuple{d}$ to fail ample stability,
  by producing a (possibly different) codimension-one type that does violate \eqref{equation:rigidity-inequality}.

  Also,
  note that strong ample stability asks that~$-\langle\tuple{e},\tuple{d}-\tuple{e}\rangle\geq 2$
  for \emph{every} destabilising proper non-zero subdimension vector~$\tuple{e}$,
  i.e.,~one for which~$\mu(\tuple{e})>\mu(\tuple{d})$.
  By \cref{corollary:equivalence},
  ample stability merely asks this for those subdimension vectors~$\tuple{e}$
  which occur as the first entry of a Harder--Narasimhan type.
\end{remark}

\printbibliography

\medskip

\begingroup\small
\emph{Pieter Belmans}, \url{p.belmans@uu.nl} \\
Mathematical Institute, Utrecht University, Budapestlaan 6, 3584CD Utrecht, The Netherlands

\smallskip

\emph{Gianni Petrella}, \url{g.petrella@uva.nl} \\
Korteweg-de Vries Instituut, University of Amsterdam, Science Park 107, 1098XG Amsterdam, The Netherlands
\endgroup

\end{document}